\newcommand{\aB}{{\mathcal{D}}}
\newcounter{alphcount}
{\begin{list}{{\upshape(}\alph{alphcount}\/{\upshape)\ }}%
             {\usecounter{alphcount}\labelwidth1.5em%
              \leftmargin2em\labelsep0.5em\topsep0.25em plus 0.5ex%
              \itemsep0.25em plus 0.5ex\parsep0em}}{\end{list}}
{\begin{list}{{\upshape(#1\arabic{alphcount})\hfill}}%
             {\usecounter{alphcount}\labelwidth2.5em%
              \leftmargin2.5em\labelsep0em\topsep0.25em plus 0.5ex%
              \itemsep0.25em plus 0.5ex\parsep0em}}{\end{list}}
\newcommand\tageq{\addtocounter{equation}{1}\tag{\theequation}}
\newtheorem{thm}{Theorem}[section]
\newtheorem{lemma}{Lemma}[section]    
\newtheorem{definition}{Definition}[section]
\newtheorem{Corollary}{Corollary}[section]
\newtheorem{question}{Question}[section]
\newtheorem{remark}{Remark}[section]
\theoremstyle{definition}
\newcommand{\aC}{\mathcal{C}}
\newcommand{\znum}{\mathbb{Z}}
\newcommand{\PH}{PH}
\newcommand{\VR}{\mathrm{VR}}
\providecommand{\AMS}[1]{\textbf{\textit{AMS subject classification: }} #1}
\newcounter{relctr} 
\everydisplay\expandafter{\the\everydisplay\setcounter{relctr}{0}} 
\begin{document}

\begin{frontmatter}

\title{Measures determined by their values on balls and Gromov-Wasserstein convergence}
\begin{aug}
\author[A]
{\fnms{Anne} \snm{van Delft}\ead[label=e1,mark]{anne.vandelft@columbia.edu}}
\and
\author[B]{\fnms{Andrew J.} \snm{Blumberg}\ead[label=e2]{andrew.blumberg@columbia.edu}}
\address[A]{Department of Statistics, Columbia University, 1255 Amsterdam Avenue, New York, NY 10027, USA.\\ \printead{e1}}
\address[B]{Irving Institute for Cancer Dynamics, Columbia University, 1190 Amsterdam Avenue, New York, NY, 10027, USA.\\ \printead{e2}}

\end{aug}

\begin{abstract}
A classical question about a metric space is whether Borel measures on the space are determined by their values on balls.  We show that for any given measure this property is stable under Gromov-Wasserstein convergence of metric measure spaces.  We then use this result to show that suitable bounded subspaces of the space of persistence diagrams have the property that any Borel measure is determined by its values on balls.  This justifies the use of empirical ball volumes for statistical testing in topological data analysis (TDA).  Our intended application is to deploy the statistical foundations of~\cite{vDB23} for time series of random geometric objects in the context of TDA invariants, specifically persistent homology and zigzag persistence.
\end{abstract}

\AMS{Primary 62R40; secondary 55N31}

\begin{keyword}
\kwd{metric measure spaces}
\kwd{topological data analysis}
\kwd{persistence diagrams}
\kwd{Gromov-Wasserstein convergence}
\end{keyword}
\end{frontmatter}

\maketitle 

\section{Introduction}

A classical question in the theory of metric measure spaces is when Borel measures on a metric space $(X,\partial_X)$ are determined by their values on balls.

\begin{question}\label{ques:main}
Let $(X, \partial_X)$ be a metric space and denote $B_\epsilon(x)$ the ball of radius $\epsilon$ with center $x$.  Suppose that $\mu$ and $\nu$ are Borel measures on $X$ such that for all $x \in X$ and $\epsilon > 0$, the equality
\[
\mu(B_\epsilon(x)) = \nu(B_\epsilon(x))
\]
holds.  Is it the case that $\mu = \nu$?
\end{question}

In general, the answer to this question is negative, but over the years many classes of metric spaces for which this property holds have been identified~\cite{Christensen}; notably various kinds of Banach spaces as well as metric spaces that are finite dimensional in a suitable sense, e.g., metric spaces of finite doubling dimension.

A positive answer to this question is salient for the statistical analysis on such Polish-valued processes; many statistical tests for distinguishing measures on metric spaces or, more generally, for characterizing measures can be seen to depend on the values of the measures on balls.  One of the difficulties in developing inference techniques in such a setting stems from the lack of a vector space structure; applying statistical theory requires one to either use notions such as Fr{\'e}chet means or to perform inference on the measure in an indirect manner, such as via the collection of distances. Since Fr{\'e}chet means do not always exist, are not necessarily unique when they do, and are often very expensive to compute, there is considerable interest in the latter approach. 

A focus on distributions of distances is justified by Gromov's reconstruction theorem for metric measure spaces $(S,\partial_S, \mu)$; this result says that the infinite-dimensional distance matrix distribution generated by an iid sample from $\mu$ is a complete invariant (up to measure-preserving isometry) of a metric measure space~\cite{Gromov}.  This theorem implicitly underlies many areas of geometric data analysis (e.g., manifold learning and topological data analysis), and has been used explicitly in the probabilistic analysis of metric measure spaces and their geometry~\cite{BWM18, Brecheteau, Greven, Ozawa}.  We are particularly interested in applications for the more general type of processes described in~\cite{vDB23}, which introduce a new framework for statistical inference methods for the analysis of nonstationary processes of geometric objects. In Section 3 of \cite{vDB23}, the authors provided an extension of Gromov's mm reconstruction theorem and proved that the infinite-dimensional distance matrix distribution
\[
(\partial_S(X_t,X_s) : t,s \in \znum)  
\]
is a complete invariant of a stationary ergodic Polish-valued stochastic process $X=(X_t\colon t\in \znum)$. Put differently, that it is a complete invariant of a metric measure-preserving dynamical system $(S^\znum, \rho_S, \mu_X, \theta)$ where the measure-preserving map $\theta$ is ergodic under $\mu_X$.

However, one sample of the distance matrix distribution does not give us enough degrees of freedom for statistical inference. Therefore, the authors proposed instead to look at the corresponding ball volume processes. Building on a result in~\citep[][thm 2.2]{BWM18}, they proved that the law of \textit{any} Polish-valued stochastic process can be characterized by the corresponding family of ball volume processes under the assumption that the law is a doubling measure, which was subsequently exploited to detect nonstationary laws in stochastic processes due to time-changing shape dynamics.  More specifically, define the finite-dimensional distribution (fidi) of the process $X$ on the time index set $J$ by $\mu^X_{J}$, and let $\rho_J$ be a suitable metric that metrizes the product topology on the corresponding product.  The result is given here for completeness.

\begin{thm}{\citep[][Thm 3.6]{vDB23}}\label{thm:charmux}
Let $(X_t: t\in  \znum)$ be a Polish-valued stochastic process with law $\mu_{X}$ a locally finite Borel regular doubling measure. 
Then 
 the process $\mu^X_{J}\big( B_\epsilon(\pi_{J} \circ X): \epsilon \ge 0\big)$, where $B_\epsilon(x)$ denotes the ball of radius $\epsilon$ with center $x$, characterizes the $|J|$-dimensional dynamics on the time set $J$.
 \end{thm}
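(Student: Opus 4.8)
\subsection*{Proof proposal}

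The plan is to obtain this statement as a direct application of \citep[][Thm 2.2]{BWM18} to the finite-dimensional distribution $\mu^X_J = (\pi_J)_*\mu_X$, regarded as a Borel measure on the product space $(S^J,\rho_J)$. That cited result says that a locally finite, Borel regular, doubling measure is completely determined, up to measure-preserving isometry, by the law of its ball-volume function when the center is drawn from the measure itself. Since $\pi_J\circ X$ has law $\mu^X_J$ by definition of the fidi, the random function $\epsilon\mapsto \mu^X_J(B_\epsilon(\pi_J\circ X))$ is exactly this ball-volume function evaluated at a $\mu^X_J$-distributed center. Hence the law of the ball-volume process is precisely the invariant to which \citep[][Thm 2.2]{BWM18} applies, and the whole argument reduces to verifying that $\mu^X_J$ satisfies the hypotheses of that theorem.

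The easy hypotheses I would dispatch first. The coordinate projection $\pi_J\colon S^\znum\to S^J$ is Lipschitz for the product metrics $\rho_S$ and $\rho_J$, so the pushforward $\mu^X_J$ is again a locally finite Borel measure, and Borel regularity transfers along the continuous map $\pi_J$ by approximating Borel sets from inside and outside. The identity $\mu^X_J(A)=\mu_X(\pi_J^{-1}A)$ is the bridge between the two spaces throughout.

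\textbf{The crux, which I expect to be the main obstacle}, is showing that $\mu^X_J$ is doubling on $(S^J,\rho_J)$. The subtlety is that $\pi_J^{-1}B_\epsilon(x_J)$ is a cylinder, not a $\rho_S$-ball, so the doubling inequality for $\mu_X$ does not transfer verbatim; doubling is in general \emph{not} preserved under pushforward by a Lipschitz map. I would exploit the weighted-sum product structure of $\rho_S$ (a standard choice of metrizing metric), writing $\rho_S=\rho_J+(\text{tail})$ with tail weight $T=\sum_{t\notin J}a_t$. Choosing a basepoint $\tilde x$ extending $x_J$, the tail term lies in $[0,T]$, which sandwiches the cylinder between two genuine balls,
\[
\pi_J^{-1}B^{\rho_J}_{\epsilon-T}(x_J)\ \subseteq\ B^{\rho_S}_\epsilon(\tilde x)\ \subseteq\ \pi_J^{-1}B^{\rho_J}_{\epsilon}(x_J),
\]
and hence $\mu^X_J(B^{\rho_J}_{\delta}(x_J))\le \mu_X(B^{\rho_S}_{\delta+T}(\tilde x))$ together with $\mu^X_J(B^{\rho_J}_{\delta}(x_J))\ge \mu_X(B^{\rho_S}_{\delta}(\tilde x))$. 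For radii $r\gtrsim T$ these two bounds, combined with a bounded number of applications of the doubling property of $\mu_X$, give $\mu^X_J(B^{\rho_J}_{2r})\le C\,\mu^X_J(B^{\rho_J}_{r})$.

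The remaining difficulty, and the part I expect to require the most care, is the small-radius regime $r\lesssim T$, where the sandwich is too lossy because a $\rho_S$-ball of small radius constrains all coordinates while the cylinder constrains only those in $J$. Here I would argue separately, using that on this scale $\rho_J$ and $\rho_S$ agree on the $J$-coordinates and that local finiteness prevents $\mu_X$ from concentrating mass in the tail directions; alternatively, if the framework of~\cite{vDB23} imposes the doubling property directly at the level of each fidi, this regime is covered by hypothesis. Once $\mu^X_J$ is known to be a locally finite, Borel regular, doubling measure, \citep[][Thm 2.2]{BWM18} applies and yields that the law of the ball-volume process determines $\mu^X_J$ up to measure-preserving isometry, i.e.\ characterizes the $|J|$-dimensional dynamics on the time set $J$.
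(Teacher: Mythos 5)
You should first know that the paper does not prove this theorem: it is imported verbatim ``for completeness'' as Theorem 3.6 of \cite{vDB23}, so there is no in-paper proof to compare against. That said, your overall plan---reduce to Theorem 2.2 of \cite{BWM18} applied to the fidi $\mu^X_J$ on $(S^J,\rho_J)$---does match the route the paper attributes to \cite{vDB23} (``building on a result in [BWM18, Thm 2.2]''), and you correctly identify that the hypotheses must be verified for $\mu^X_J$, not for $\mu_X$. (The preliminary points are easier than you make them: $\mu_X$ is the law of a stochastic process, hence a Borel \emph{probability} measure on a Polish space, and so is its pushforward under the continuous map $\pi_J$; such measures are automatically Radon, so local finiteness and regularity are free.)

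The genuine gap is exactly where you flagged it, and it cannot be closed: doubling of $\mu_X$ does \emph{not} imply doubling of $\mu^X_J$, even with the weighted-sum product metric, so your small-radius regime $r \lesssim T$ does not merely ``require care''---the needed implication is false. Sketch of a counterexample: take $S=\rnum$, let $X_0$ be uniform on $[0,1]$, set $X_1=g(X_0)$ with $g(x)=0$ for $x\le 1/2$ and $g(x)=x-1/2$ for $x>1/2$, and $X_t=0$ for all other $t$. Then $\mu_X$ is supported on the graph $\{(x,g(x),0,0,\dots)\}$, which is bi-Lipschitz to $[0,1]$ under the standard weighted metric $\rho_S=\sum_t a_t(\lvert\cdot\rvert\wedge 1)$ because $g$ is $1$-Lipschitz; hence $\mu_X$ is doubling (balls centered on the support meet it in sets comparable to intervals). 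But the fidi on $J=\{1\}$ is $\tfrac{1}{2}\delta_0+\lambda|_{(0,1/2]}$, which is not doubling: for a support point $y$ slightly to the right of the atom and $r=3y/4$, one has $\mu^X_J(B_{2r}(y))\ge 1/2$ while $\mu^X_J(B_r(y))\le 3y/2\to 0$. Your sandwich argument is sound for $r\gtrsim T$, but no argument covers small radii under the stated hypothesis alone. The correct resolution is the alternative you yourself hedge toward: the doubling hypothesis must be read as imposed at the level of the finite-dimensional distributions $\mu^X_J$ (which is how the application of \cite{BWM18} in \cite{vDB23} functions), and once that is granted, the theorem is the immediate citation your first paragraph describes and the ``crux'' step you labor over disappears.
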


The assumption that $\mu_X$ is doubling ensures that the measure is completely determined by its values on metric balls.  The property of being determined by the value on balls significantly simplifies statistical analysis over the use of the infinite-dimensional matrix distributions since empirical ball volume processes take the form of $U$-processes \citep{vDB23}. \\

This simplification motivates the current paper.  The power of topological data analysis comes in large part from the fact that its invariants live in metric spaces.  Specifically, persistent homology and its variants take values in the space of barcodes (also referred to as persistence diagrams), which can be given metric space structures using various matching distances (e.g., see~\cite{TDAbook, c14, c09, o15, eh08, eh10}).  We would like to know that measures on barcode space are determined by their values on metric balls.  However, barcode space is not a doubling metric space; instead, it is infinite dimensional.  Moreover, it is not a Banach space.  On the other hand, Sheehy and Sheth have recently shown that any of the spaces of bounded barcodes with a bounded number of points are $\epsilon$-close in the Gromov-Hausdorff metric to a doubling metric space~\cite{SS22}.  In particular, they are $\epsilon$-close to metric spaces that have the property that any Borel measure is determined by its values on balls for any $\epsilon$.  Put another way, there is a sequence $\{B_n\}$ that satisfies this property for each $B_n$ which converges to $B$ in the Gromov-Hausdorff metric.  This motivates our first main theorem.

\begin{thm}\label{thm:mainthm}
Assume that $(X, \partial_X, \mu_X)$ and $\{(X_n, \partial_{X_n}, \mu_{X_n})\}$ are compact metric measure spaces. Suppose that $\{X_n\}$ converges to $X$ in the Gromov-Wasserstein metric, and that for each $X_n$, $\mu_{X_n}$ is determined by its values on balls.  Then $\mu_X$ is determined by its values on balls.
\end{thm}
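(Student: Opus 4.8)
The plan is to move the problem into a single ambient space, transport the candidate measure onto the approximating spaces $X_n$, and invoke the determinacy hypothesis there. First I would record the standard consequence of Gromov--Wasserstein convergence for compact spaces: after composing with suitable isometric embeddings we may regard $X$ and all $X_n$ as closed subspaces of one compact metric space $(Z,\partial_Z)$ in such a way that $X_n\to X$ in the Hausdorff metric of $Z$, with $h_n:=d_H(X_n,X)\to 0$, and $\mu_{X_n}\to\mu_X$ weakly as Borel measures on $Z$. Because the embeddings are isometric, a ball of $X$ (resp.\ $X_n$) is exactly the trace on $X$ (resp.\ $X_n$) of the corresponding ball of $Z$, so there is no ambiguity between intrinsic and ambient balls.

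Next, let $\nu$ be any finite Borel measure on $X$ with $\nu(B_\epsilon(x))=\mu_X(B_\epsilon(x))$ for every $x\in X$ and $\epsilon>0$, and set $\lambda:=\mu_X-\nu$, a finite signed measure on $X$ that is \emph{ball-null}, i.e.\ $\lambda(B_\epsilon(x))=0$ for all $x,\epsilon$. Since $X$ is compact, it suffices to prove $\int\phi\,d\lambda=0$ for all $\phi\in C(X)$, equivalently $\lambda=0$. I would choose Borel nearest-point projections $p_n\colon X\to X_n$ and $q_n\colon X_n\to X$ (so $\partial_Z(x,p_n(x))\le h_n$), which are $2h_n$-approximate isometries, and push everything onto $X_n$, writing $\alpha_n:=(p_n)_*\mu_X$, $\beta_n:=(p_n)_*\nu$, and $\lambda_n:=\alpha_n-\beta_n=(p_n)_*\lambda$. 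For $y\in X_n$ the distorted ball $p_n^{-1}(B_\epsilon(y))=\{x\in X:\partial_{X_n}(p_n(x),y)<\epsilon\}$ is sandwiched between the genuine $X$-balls $B_{\epsilon-2h_n}(q_n(y))$ and $B_{\epsilon+2h_n}(q_n(y))$. Since $\lambda$ annihilates genuine balls, one gets the key estimate
\[
|\lambda_n(B_\epsilon(y))|\le |\lambda|\big(A_n(y,\epsilon)\big),\qquad A_n(y,\epsilon):=\{x\in X:\,|\partial_X(x,q_n(y))-\epsilon|\le 2h_n\},
\]
where $|\lambda|$ is the total variation.

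The idea is now that $\alpha_n,\beta_n$ are two competing nonnegative measures on $X_n$ that agree on all balls \emph{up to the annulus error above}. Were this agreement exact, the determinacy available on $X_n$ (which in the situations of interest, e.g.\ the doubling spaces of Sheehy--Sheth, holds for all Borel measures) would force $\alpha_n=\beta_n$, i.e.\ $\lambda_n=0$; letting $n\to\infty$ and using that $p_n\to\mathrm{id}$ uniformly on $X$, so that $(p_n)_*\lambda\to\lambda$ weakly, would then yield $\lambda=0$ and finish the proof. I expect the passage from approximate to exact ball-agreement to be the heart of the matter: the error $|\lambda|(A_n(y,\epsilon))$ need not vanish, and the determinacy hypothesis is purely qualitative (a uniqueness statement), so it cannot be fed an approximate competitor. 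A secondary technical point is that the theorem's per-measure hypothesis must be matched to the specific pair $\alpha_n,\beta_n$ (or to $\mu_{X_n}$ and $\mu_{X_n}-\lambda_n$, whose nonnegativity must be checked).

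To overcome the main obstacle I would exploit that $|\lambda|$ is a single fixed finite measure: for each center the set of radii $\epsilon$ carrying positive sphere mass $\{\partial_X(\cdot,z)=\epsilon\}$ is countable, and as $h_n\to 0$ the annulus mass $|\lambda|(A_n(y,\epsilon))$ tends to that sphere mass. The delicate part is uniformity over the moving centers $q_n(y)$; I would handle this by integrating out $\epsilon$ (a Fubini argument over radii) to select comparison radii at which the annulus masses are simultaneously small, combined with tightness of $\{\lambda_n\}$ on the compact space $Z$ to extract a weak limit and identify it with $\lambda$. The cleanest route to making the qualitative hypothesis bite in the limit is to reformulate ``$\mu_{X_n}$ determined by balls'' as the statement that ball-indicators form a measure-determining family, and to show that this determining property is inherited by the weak limit along the Hausdorff/weak convergence furnished by Gromov--Wasserstein convergence; verifying the stability of that reformulation is, I anticipate, where the genuine work lies.
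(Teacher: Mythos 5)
There is a genuine gap --- in fact two. First, your launching point is false: Gromov--Wasserstein convergence of compact metric measure spaces does \emph{not} furnish embeddings into a common $Z$ with $d_H(i_n(X_n), i(X)) \to 0$. The metric $d_{GW}$ controls measures, not supports: take $X_n = \{0,1\} \subset \rnum$ with $\mu_{X_n} = (1-1/n)\delta_0 + (1/n)\delta_1$; then $X_n \to (\{0\}, \delta_0)$ in $d_{GW}$ while $d_{GH}(X_n, \{0\})$ stays bounded away from zero. All that is actually available (this is the paper's lemma, obtained from the Gromov--Prokhorov analogue in Greven et al.\ together with the sandwich \eqref{eq:wassersandwich}) is a common space $Z$ in which the pushforward measures converge in the Wasserstein, hence weak, topology. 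So the quantity $h_n$, the nearest-point projections $p_n, q_n$, and the annulus estimate built from them have no foundation. Second --- and you say this yourself --- the heart of the argument is missing: the determinacy hypothesis on $X_n$ is a purely qualitative uniqueness statement and cannot be applied to $\alpha_n, \beta_n$, which agree on balls only up to annulus errors; and your closing proposal, to ``reformulate determinacy as a measure-determining family and show this property is inherited by the weak limit,'' is precisely the theorem to be proved, so the plan is circular exactly where the work lies. You also correctly flag, but do not resolve, the hypothesis mismatch: determinacy is assumed only for the single measure $\mu_{X_n}$, not for your transported competitors (fixing this by assuming determinacy of all Borel measures on $X_n$ would strengthen the hypothesis).

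The missing idea, which lets the paper bypass every one of these obstructions, is to replace the two-measure uniqueness property by a single-measure identity. From the ball values of $\mu$ one builds, via the Caratheodory construction, the metric outer measure $\mu^*(A) = \sup_{\epsilon > 0} \mu_{B_\epsilon}(A)$; one always has $\mu \leq \mu^*$, and ``$\mu$ is determined by its values on balls'' is encoded as the identity $\mu = \mu^*$ (if $\nu$ matches $\mu$ on all balls, then $\nu^* = \mu^*$ by construction, whence $\nu \leq \nu^* = \mu^* = \mu$, and equality of total masses forces $\nu = \mu$). Because this identity concerns one measure at a time, the hypothesis applies verbatim to each $\mu_{X_n}$ with no transport between spaces: embedding everything isometrically into a common $Z$, weak convergence and the Portmanteau lemma give $\mu^*_{X_n}(A) \to \mu^*_X(A)$ on continuity sets $A$, the hypothesis identifies $\mu^*_{X_n}(A)$ with $\mu_{X_n}(A)$, and hence $\mu_X(A) = \mu^*_X(A)$ for every $\mu_X$-continuity set; the paper then concludes $\mu_X = \mu^*_X$ because the continuity sets form a $\pi$-system generating the Borel $\sigma$-algebra (its separating-class lemma). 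No Hausdorff convergence, projections, annuli, or Fubini over radii are needed. Your final instinct --- find a reformulation of the ball-determinacy property that is closed under weak limits --- is exactly right; the concrete device you were missing is the outer measure $\mu^*$.
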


Combining this with the result of~\cite{SS22}, we obtain the following essential application:

\begin{thm}\label{thm:barcods}
The space $\aB^N_\alpha$ of $N$-point bounded barcodes has the property that any Borel measure is determined by its values on balls.
\end{thm}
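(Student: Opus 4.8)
The plan is to exhibit $B := \aB^N_\alpha$ as a Gromov-Wasserstein limit of doubling metric measure spaces and then invoke \autoref{thm:mainthm}. First I would record that $B$, the space of $N$-point barcodes whose coordinates are uniformly bounded, is compact in the relevant matching metric $\partial$; this keeps us inside the compact setting demanded by \autoref{thm:mainthm}. The crucial external input is the result of Sheehy and Sheth \cite{SS22}, already recalled in the introduction: there is a sequence $\{(B_n, \partial_{B_n})\}$ of doubling metric spaces converging to $B$ in the Gromov-Hausdorff metric. After passing to completions if necessary (the completion of a bounded doubling space is compact and doubling) we may take each $B_n$ compact, and the convergence furnishes $\epsilon_n$-isometries $g_n \colon B \to B_n$ with $\epsilon_n \to 0$, i.e. $\sup_{x,x'}|\partial_{B_n}(g_n(x),g_n(x')) - \partial(x,x')| \le \epsilon_n$ together with $\epsilon_n$-density of $g_n(B)$ in $B_n$.

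Next, to apply \autoref{thm:mainthm} I must equip each $B_n$ with a measure so that the sequence converges to a prescribed limit in the Gromov-Wasserstein metric. Fix an arbitrary Borel measure $\mu$ on $B$, which we may normalize to a probability measure. I would arrange the $\epsilon_n$-isometries $g_n$ to be Borel measurable (possible since both spaces are compact, hence separable) and set $\mu_n := (g_n)_* \mu$. The coupling $(\mathrm{id}, g_n)_*\mu$ of $\mu$ and $\mu_n$ is supported on pairs $(x, g_n(x))$, so its distortion integrand is $|\partial(x,x') - \partial_{B_n}(g_n(x), g_n(x'))| \le \epsilon_n$ pointwise. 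Hence the Gromov-Wasserstein distance between $(B, \partial, \mu)$ and $(B_n, \partial_{B_n}, \mu_n)$ is at most a constant multiple of $\epsilon_n$, and therefore $(B_n, \partial_{B_n}, \mu_n) \to (B, \partial, \mu)$ in the Gromov-Wasserstein metric.

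Since each $B_n$ is doubling, it has finite doubling dimension, and so every Borel measure on it, in particular $\mu_n$, is determined by its values on balls; this is precisely the finite-dimensional instance of \autoref{ques:main} discussed in the introduction (cf. \cite{Christensen}). All hypotheses of \autoref{thm:mainthm} are thus verified, and we conclude that the limit measure $\mu$ is determined by its values on balls. As $\mu$ was an arbitrary Borel measure on $\aB^N_\alpha$, this establishes the theorem.

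I expect the main obstacle to lie in the second step. The result of \cite{SS22} provides only Gromov-Hausdorff closeness of the underlying metric spaces and says nothing about measures, whereas \autoref{thm:mainthm} requires Gromov-Wasserstein convergence of metric \emph{measure} spaces. Bridging this gap means manufacturing the approximating measures $\mu_n$ ourselves and verifying that the pushforward construction genuinely yields vanishing Gromov-Wasserstein distance. The delicate points are the measurable selection of the $\epsilon_n$-isometries and the bookkeeping needed to pass from a correspondence to a single Borel map; once these are secured, the distortion estimate, and hence the convergence, follows directly from the $\epsilon_n$-isometry bound.
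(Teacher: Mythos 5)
Your proposal is correct and shares the paper's overall architecture---approximate $\aB^N_\alpha$ by the Sheehy--Sheth doubling spaces, transport the given measure $\mu$ to the approximants, establish Gromov--Wasserstein convergence, and invoke \autoref{thm:mainthm}---but you prove the key convergence lemma by a genuinely different route. The paper exploits the concrete structure of the approximants: $\aB_\epsilon$ is the quotient of the box by an $\epsilon$-dense sample of the line $y=x$ rather than by the whole line, so there is an evident quotient map $p_\epsilon \colon \aB_\epsilon \to \aB^N_\alpha$ along which $\mu$ is pulled back (the set-level section is canonical, so no selection argument is needed), and Gromov--Wasserstein convergence of $\{(\aB_{\epsilon_i}, p_{\epsilon_i}^*\mu)\}$ is verified via Gromov's reconstruction theorem, i.e.\ convergence of the infinite distance matrix distributions (citing \cite{Greven} together with \eqref{eq:wassersandwich}), using the barcode-specific estimate that $p_\epsilon$ distorts the matching distance by at most $N\epsilon$, and only for pairs having points in the $\epsilon$-strip near the diagonal. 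You instead work abstractly from Gromov--Hausdorff closeness: you extract measurable $\epsilon_n$-isometries $g_n$, push $\mu$ forward, and bound the Gromov--Wasserstein distance by the sup-distortion of the coupling $(\mathrm{id},g_n)_*\mu$. This is more portable---your argument shows in general that any compact space GH-approximated by compact doubling spaces inherits the ball-determinacy property, with no barcode geometry used---but it shifts the burden onto exactly the two points you flag: a measurable selection (Kuratowski--Ryll-Nardzewski applied to the closed correspondence, harmless here since all spaces are compact) to replace the correspondence by a Borel map, and the standard gluing construction needed to convert a coupling with sup-distortion at most $\epsilon_n$ into isometric embeddings of both spaces into a common $Z$ with Wasserstein distance $O(\epsilon_n)$, since the paper's $d_{GW}$ is defined via embeddings rather than couplings. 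Both proofs rest on the same crux, which you correctly identify: \cite{SS22} provides closeness only of metric spaces, and the substance of the lemma is manufacturing measures on the approximants converging in Gromov--Wasserstein; your pushforward $\mu_n=(g_n)_*\mu$ and the paper's pullback $p_\epsilon^*\mu$ are two realizations of the same transport, with the paper's distance-matrix criterion and your coupling estimate playing interchangeable roles.
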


Theorem~\ref{thm:barcods} has two essential corollaries that are essential for applications in topological data analysis.  In the following statement, we are imagining that we have a distribution on point clouds consisting of $n$ points sampled from a metric measure space $(X, \partial_X, \mu_X)$.  A typical way to produce such a distribution on point clouds is to consider taking $n$ iid samples from $\mu_X$ (see \autoref{sec4}).  The persistent homology functor $PH_k$ takes a point cloud to the composite of the Vietoris-Rips complex and the homology $H_k$.

\begin{Corollary}\label{thm:PH}
The pushforward of any Borel measure on point clouds under the persistent homology functor $PH_k$ is determined by its values on balls.
\end{Corollary}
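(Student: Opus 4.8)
The plan is to reduce the statement to Theorem~\ref{thm:barcods} by showing that the pushforward $(PH_k)_*\mu$ is a Borel measure supported on a space of the form $\aB^N_\alpha$ for suitable $N$ and $\alpha$. Once this is in hand the conclusion is immediate, since Theorem~\ref{thm:barcods} asserts that \emph{every} Borel measure on $\aB^N_\alpha$ is determined by its values on balls. Thus the work is entirely in identifying the correct target space and checking measurability of the map.

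First I would fix the geometry: we take point clouds of $n$ points drawn from the compact metric measure space $(X,\partial_X,\mu_X)$, so every point cloud is contained in a region of diameter at most $\alpha := \diam(X) < \infty$. Applying $PH_k$ (the Vietoris--Rips complex followed by $H_k$) to such a point cloud yields a barcode whose birth and death values lie in the bounded range determined by $\alpha$, since in the Vietoris--Rips construction each simplex enters the filtration at the diameter of its vertex set, which is at most $\alpha$. Moreover, the number of bars is controlled purely combinatorially: a degree-$k$ interval can only be created by the insertion of a $k$-simplex, so the number of bars is at most the number of $k$-simplices on $n$ vertices, namely $N := \binom{n}{k+1}$, a quantity depending only on $n$ and $k$. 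Hence $PH_k$ maps the space of such point clouds into the bounded $N$-point barcode space $\aB^N_\alpha$.

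Next I would verify that $PH_k$ is Borel measurable, which follows from continuity. The assignment of a point cloud to its Vietoris--Rips filtration is Lipschitz from the metric on point clouds (the Hausdorff or Gromov--Hausdorff distance, depending on the model) to the interleaving distance on filtrations, and the stability theorem for persistent homology \cite{TDAbook} bounds the bottleneck distance between the resulting degree-$k$ persistence modules by that interleaving distance. Composing, $PH_k$ is Lipschitz, hence continuous, and in particular Borel measurable. Consequently the pushforward $(PH_k)_*\mu$ of any Borel measure $\mu$ on point clouds is a well-defined Borel measure on $\aB^N_\alpha$, and an appeal to Theorem~\ref{thm:barcods} finishes the argument.

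The main obstacle will be pinning down the target space precisely, that is, producing explicit bounds $N$ and $\alpha$ so that the image of $PH_k$ genuinely lands in a \emph{single} $\aB^N_\alpha$. The bound $\alpha$ is automatic from compactness of $X$ (and indeed the argument requires boundedness, since a non-compact $X$ would permit unbounded death times and break the reduction), but the combinatorial bound on the number of bars requires care about multiplicities and about degenerate configurations such as repeated or coincident points; one must confirm that these do not inflate the bar count past $N$ and that they are mapped measurably. By contrast, the measurability of $PH_k$ is routine once stability is invoked, and the concluding appeal to Theorem~\ref{thm:barcods} is purely formal.
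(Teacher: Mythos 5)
Your proposal takes essentially the same route as the paper, which also proves this corollary simply by pushing the measure on point clouds forward along $PH_k$ to obtain a distribution on barcodes in $\aB^N_\alpha$ and then invoking Theorem~\ref{thm:barcodeproperty} immediately. The explicit bounds $N = \binom{n}{k+1}$ and $\alpha = \diam(X)$, and the measurability check via the stability theorem, are details the paper leaves implicit, and your treatment of them is sound.
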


Persistent homology arises from filtered simplicial complexes or spaces; i.e., systems $A_0 \to A_1 \to A_2 \to \ldots \to A_\ell$.  In fact, it turns out that a variant of persistent homology exists which allows the arrows to go in either direction, known as zigzag persistence~\cite{csm09}.  We explain below in more detail how zigzag persistence arises, but roughly speaking we have the following corollary.

\begin{Corollary}\label{thm:zigzag}
The pushforward of any Borel measure on point clouds under the zigzag persistent homology functor is determined by its values on balls.
\end{Corollary}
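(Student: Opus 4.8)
The plan is to follow exactly the template used for Corollary~\ref{thm:PH} and reduce the statement to Theorem~\ref{thm:barcods}. The essential observation is that, just like ordinary persistent homology, zigzag persistent homology takes values in barcode space: applying $H_k$ over a field to a zigzag diagram of Vietoris--Rips complexes yields a zigzag module, and the structure theorem for zigzag modules decomposes it into interval summands, which is precisely a barcode. Thus the zigzag functor is a map from the space of (tuples of) point clouds into barcode space, and its pushforward is a measure on barcode space.

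First I would pin down the codomain. Because each point cloud is finite, every complex appearing in the zigzag has a bounded number of simplices, so the total number of interval summands is bounded by some $N$ determined by the maximal point count; moreover, in the bounded setting the birth and death parameters range over a fixed interval, so every bar has bounded length. Hence the image of the zigzag functor lies in $\aB^N_\alpha$ for an appropriate $N$ and $\alpha$, and we are in the situation covered by Theorem~\ref{thm:barcods}.

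Next I would verify that the zigzag functor is Borel measurable as a map into $\aB^N_\alpha$, so that the pushforward of any Borel measure on point clouds is a genuine Borel measure on $\aB^N_\alpha$. This is where a zigzag stability estimate enters: the bottleneck distance between output barcodes is controlled by the Hausdorff (or appropriate interleaving) distance between inputs, which yields continuity---hence measurability---away from the combinatorial degeneracies, and measurability on the whole space by partitioning into the countably many combinatorial types on which the construction is continuous. With measurability in hand, the pushforward is a Borel measure on $\aB^N_\alpha$, and Theorem~\ref{thm:barcods} immediately gives that it is determined by its values on balls.

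The main obstacle I anticipate is precisely this measurability step. Ordinary persistence stability is clean, but zigzag modules are more delicate, and the zigzag construction from point clouds (e.g.\ via interleaved unions) can have combinatorial jumps where the barcode changes discontinuously. The work is to confirm that these jumps occur along a Borel set and that the induced map is Borel measurable into the fixed space $\aB^N_\alpha$; once that is established, the reduction to Theorem~\ref{thm:barcods} is formal and identical to the proof of Corollary~\ref{thm:PH}.
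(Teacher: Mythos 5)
Your proposal follows the paper's route exactly: observe that zigzag persistence lands in the bounded barcode space $\aB^N_\alpha$ and invoke Theorem~\ref{thm:barcods} (Theorem~\ref{thm:barcodeproperty}), which applies to \emph{any} Borel measure on that space, so the reduction is formal just as for Corollary~\ref{thm:PH}. The measurability analysis you flag as the main obstacle is reasonable extra diligence, but the paper sidesteps it entirely---it simply takes the pushforward $({ZP}_k)_\ast \mu_J^{X}$ to be a Borel measure on barcodes for any reasonable zigzag construction and concludes immediately from Theorem~\ref{thm:barcodeproperty}.
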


The paper is organized as follows.  In section~\ref{sec:mainthm}, we prove Theorem~\ref{thm:mainthm}.  The application to the space of (bounded) barcodes is discussed in Section~\ref{sec3}, and in Section~\ref{sec4} we explain consequences for topological data analysis, specifically persistent homology and zigzag persistence.

\FloatBarrier
\section*{Acknowledgements}
We would like to thank Mike Lesnick and Mike Mandell for useful conversations.  Both authors were partially supported by NSF grant DMS-2311338.  Blumberg was partially supported by ONR grant N00014-22-1-2679.  

\section{Convergence of measures and values on balls}\label{sec:mainthm}

The purpose of this section is to prove Theorem~\ref{thm:mainthm}, which shows that if a measure can be approximated in a suitable sense by measures determined by their values on balls, it itself is determined by its values on balls.  Throughout, let $(X, \partial_X, \mu_X)$ be a metric measure space.  We begin by reviewing the Caratheodory construction of an outer measure given the measures of a family of subsets of $X$.

\begin{definition}
Let $\aC$ denote a collection of subsets of $X$.  We define an outer
measure $\mu_C$ on $X$ by the formula
\[
\mu_C(A) = \inf_{\{C_i\} \subset \aC, A \subset \bigcup_i C_i} \sum_i \mu(C_i),
\]
where we require $\{C_i\}$ to be a countable set.
\end{definition}

We now define a new Borel measure on $X$ using the values of $\mu$ on
balls as follows:

\begin{definition}
Let $\mu^*$ be the Borel measure associated to the metric outer
measure
\[
\mu^*(A) = \sup_{\epsilon > 0} \mu_{B_{\epsilon}}(A).
\]
\end{definition}

\begin{remark}
Notice that for $\epsilon > \epsilon'$,
$\mu_{B_{\epsilon'}}(A) \geq \mu_{B_{\epsilon}}(A)$.
\end{remark}

Now, it is always true that $\mu(A) \leq \mu^*(A)$; this follows by
approximating $A$ by a cover of metric balls.  Thus, the interesting
question is whether $\mu(A) \geq \mu^*(A)$.\\

In order to prove that Borel measures on barcode spaces are determined by their values on metric balls, we require a result regarding convergence in the Gromov-Wasserstein metric of a sequence of compact metric measure spaces (\autoref{thm:convergence}). To make this precise we recall some basic facts and definitions. First, we mention the well-known fact that the topology of weak convergence of probability measures on a separable metric space $(Z,\partial_Z)$ is generated by the L{\'e}vy-Prokhorov metric $d_P$. Provided that the space has bounded diameter, the same holds true for the Wasserstein metric $d_W$, which is given by
\[
d_W(\mu,\nu):= \inf_{\pi \in \Pi(u,\nu)} \int_{Z\times Z} \partial_Z(x,x^\prime) d \pi(x,x^\prime),
\]
where the infimum is taken over all couplings $\pi$ of $\mu$ and $\nu$. Indeed, it can be shown that  \citep{Gibbs}
\[
d_P^2 \le d_W \le (\text{diam}(Z)+1)d_P. \tageq \label{eq:wassersandwich}
\]
In analogy to the Levy-Prokhorov metric giving rise to the Gromov-Prokhorov metric, 
the Wasserstein metric gives rise to the Gromov-Wasserstein metric between two metric measure spaces $\mathcal{X}=(X, \partial_X,\mu_X)$ and $\mathcal{Y}=(Y, \partial_Y, \mu_Y)$; 
\[
d_{GW} =\inf_{(i_X, i_Y, Z)} d_W^{(Z,\partial_Z)} \big((i_X)_{\ast}\mu_X,(i_Y)_{\ast}\mu_Y\big)
\]
where the infimum is taken over all isometric embeddings $i_X\colon \text{supp}(\mu_X) \to Z$ and $i_Y\colon \text{supp}(\mu_Y) \to Z$. 

Key in our argument is then the following theorem.

\begin{thm}\label{thm:convergence}
Assume that $(X, \partial_X, \mu_X)$ and $\{(X_n, \partial_{X_n}, \mu_{X_n})\}$ are compact metric measure spaces.  Suppose that $\{X_n\}$ converges to $X$ in the Gromov-Wasserstein
metric, and that for each $X_n$, $\mu_{X_n} = \mu^*_{X_n}$.  Then
$\mu_X = \mu^*_X$.
\end{thm}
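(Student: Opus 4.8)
The plan is to reduce the claim to a single inequality of total masses and then transport an efficient ball cover from the spaces $X_n$ to $X$. Since $\mu_X(A)\le\mu_X^*(A)$ for every Borel $A$ (as noted above) and $\mu_X^*$ restricts to a Borel measure dominating $\mu_X$, it suffices to prove $\mu_X^*(X)\le\mu_X(X)$: once $\mu_X^*(X)=\mu_X(X)<\infty$, additivity of $\mu_X^*$ on a Borel set $A$ and its complement forces $\mu_X^*(A)=\mu_X(A)$. Because $\mu_X^*(X)=\sup_{\epsilon>0}\mu_{X,B_\epsilon}(X)$, this reduces to the covering statement: for every $\epsilon>0$ and $\delta>0$ there is a finite cover of $X$ by balls of radius at most $\epsilon$ whose $\mu_X$-masses sum to at most $\mu_X(X)+\delta$.

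First I would realize the convergence in a single space. By the definition of $d_{GW}$ and compactness, I can isometrically embed $X$ and, for all large $n$, each $X_n$ into one compact metric space $(Z,\partial_Z)$ so that $\mu_n:=(i_{X_n})_\ast\mu_{X_n}$ converges to $\mu:=(i_X)_\ast\mu_X$ in the Wasserstein, hence weak, topology on $Z$. Under these embeddings a ball of $X_n$ is the trace on $\mathrm{supp}(\mu_n)$ of a $Z$-ball of the same radius and mass, and likewise for $X$, so the covering problem may be posed intrinsically in $Z$. A crucial geometric consequence is that the $1$-Lipschitz functions $z\mapsto\partial_Z(z,\mathrm{supp}(\mu_n))$ tend to $0$ pointwise on the compact set $\mathrm{supp}(\mu)$; by equicontinuity the convergence is uniform, so $\mathrm{supp}(\mu)\subseteq(\mathrm{supp}(\mu_n))^{\eta_n}$ with $\eta_n\to0$. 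This guarantees that a cover of $\mathrm{supp}(\mu_n)$, suitably enlarged, also covers $\mathrm{supp}(\mu)$, leaving no uncontrolled remainder.

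Next I would transport the cover. Fix $\epsilon,\delta$. Applying $\mu_{X_n}=\mu_{X_n}^*$ at scale $\epsilon-\eta_n$ yields a cover $\{B_{r_j}(z_j)\}_j$ of $\mathrm{supp}(\mu_n)$ with $r_j\le\epsilon-\eta_n$ and $\sum_j\mu_n(B_{r_j}(z_j))\le\mu_n(Z)+\delta'$. Enlarging each ball by $\eta_n$, discarding those that miss $\mathrm{supp}(\mu)$, and re-centering the rest onto nearby points of $\mathrm{supp}(\mu)$—which enlarges radii only by a bounded factor, absorbed by starting one scale finer—produces a genuine cover of $X$ by balls $\widetilde B_j$ of radius at most $\epsilon$. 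To bound its total mass I would write $\sum_j\mu(\widetilde B_j)=\mu\big(\bigcup_j\widetilde B_j\big)+\int(N-1)_+\,d\mu$, where $N$ is the overlap multiplicity; the first term is at most $\mu_X(X)$, so everything comes down to showing the overlap excess $\int(N-1)_+\,d\mu$ is small. The near-tightness $\sum_j\mu_n(B_{r_j}(z_j))-\mu_n(Z)\le\delta'$ says precisely that the original cover is almost non-overlapping for $\mu_n$, and I would transfer this smallness to $\mu$ by comparing ball masses through the optimal coupling and using that $\mu_m(B)\to\mu(B)$ for every ball $B$ whose bounding sphere is $\mu$-null.

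The hard part is exactly this last mass estimate: the enlargement creates thin annular shells around each $\partial B_{r_j}(z_j)$, the re-centering can inflate overlaps, and the per-ball error in passing between $\mu_n$- and $\mu$-masses accumulates over a number of balls that a priori grows with $n$. I would control these three effects as follows. (i) Choose every radius among the co-countably many values for which the relevant spheres carry no $\mu$-mass, so the annular contributions vanish as the enlargements shrink. (ii) Use the almost-disjointness above, which passes to $\mu$ by weak convergence, to keep $\int(N-1)_+\,d\mu$ below $\delta$. (iii) Invoke Gromov's precompactness criterion for the convergent family $\{X_n\}$ to bound, uniformly in $n$, the number of radius-$\epsilon$ balls required at each resolution; with the number of balls bounded the accumulated coupling error is $O\big(N_\epsilon\,d_{GW}(X_n,X)\big)\to0$. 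Uniformly bounding the cardinality of a \emph{tight} cover is the genuinely delicate point, and it is where precompactness does the real work. Sending $n\to\infty$ and then $\delta',\eta_n\to0$ along a diagonal gives $\mu_{X,B_\epsilon}(X)\le\mu_X(X)$ for every $\epsilon$, whence $\mu_X^*(X)\le\mu_X(X)$ and therefore $\mu_X=\mu_X^*$.
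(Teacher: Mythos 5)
Your opening reductions are sound: $\mu_X\le\mu_X^*$ plus finiteness reduces everything to $\mu_{X,B_\epsilon}(X)\le\mu_X(X)$ for each $\epsilon$, and realizing the convergence in a common space via isometric embeddings is exactly what the paper does (Lemma~5.7 of \cite{Greven} together with \eqref{eq:wassersandwich}), as is your uniform support estimate $\mathrm{supp}(\mu)\subseteq(\mathrm{supp}(\mu_n))^{\eta_n}$. But the core of your argument fails at the point you yourself flag as delicate, step (iii), and the invocation of precompactness does not repair it. Gromov precompactness controls \emph{covering numbers}, i.e., the minimal cardinality of a cover by $\epsilon$-balls; it says nothing about the cardinality of a cover that is near-optimal \emph{in mass} for the Carath\'eodory functional $\mu_{B_\epsilon}$. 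Such a cover is only constrained to use balls of radius \emph{at most} $\epsilon$, and may consist of arbitrarily many arbitrarily small balls; there is no reason a cover of cardinality $N_\epsilon$ comes within $\delta'$ of the infimum. Without a uniform cardinality bound, the rest of the estimate collapses: the accumulated coupling error $O(N_\epsilon\, d_{GW}(X_n,X))$ is vacuous (and in any case $|\mu_n(B)-\mu(B)|$ has no uniform rate over a family of balls, only ball-by-ball convergence when the bounding sphere is $\mu$-null); the total annular mass $\sum_j\mu\big(B_{r_j+\eta_n}(z_j)\setminus B_{r_j}(z_j)\big)$ created by the $\eta_n$-enlargement need not vanish, since each term is small but the number of terms grows with $n$; and the almost-disjointness $\int (N-1)_+\,d\mu_n\le\delta'$ cannot be passed to the limit because the multiplicity function and the measure both vary with $n$. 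Note also that re-centering and ``enlarging radii by a bounded factor, absorbed by starting one scale finer'' does not control overlap mass: $(N-1)_+$ is not stable under dilating balls absent a doubling-type hypothesis --- if it were, measures would be determined by ball values in great generality and the hypothesis $\mu_{X_n}=\mu_{X_n}^*$ would be nearly superfluous. A smaller, repairable issue: $Z$ cannot in general be taken compact (take $X_n$ a two-point space with the far point at distance $n$ carrying mass $2^{-n}$; this GW-converges to a one-point space), and your transported cover only covers $\mathrm{supp}(\mu)$, so covering $X$ requires the additional countable null cover of the open complement of the support.

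The paper's proof shows the hard analysis is avoidable: since $\mu^*$ is itself a Borel measure (the Carath\'eodory construction with balls of radius at most $\epsilon$, followed by $\sup_\epsilon$, is a metric outer measure) and $\mu^*_{X_n}=\mu_{X_n}$ by hypothesis, weak convergence in the common embedding plus the Portmanteau lemma identifies $\mu^*_X(A)=\mu_X(A)$ for every $\mu_X$-continuity set $A$; Lemma~\ref{lem:sepclass} then shows the continuity sets form a separating class --- a $\pi$-system generating the Borel $\sigma$-algebra, using that they form a base for the topology and second countability. So the missing idea in your attempt is not a finer counting argument but a change of level: compare the measures $\mu^*_{X_n}$ and $\mu^*_X$ directly as weak limits evaluated on continuity sets, rather than transporting near-optimal ball covers, whose cardinality and overlap structure you cannot control.
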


The following lemma, which follows immediately from \eqref{eq:wassersandwich} and from Lemma 5.7 in \cite{Greven} (which proves the analogous result for the Gromov-Prokhorov distance), allows us to convert the statement above to a question involving Wasserstein convergence.

\begin{lemma}
Let $\{(X_n, \partial_{X_n}, \mu_X)\}$ be a sequence of compact metric measure spaces.  Then
$\{X_n\}$ converges to $X$ in the Gromov-Wasserstein metric if and
only if there exists a metric measure space $(Z, \partial_Z, \mu_Z)$ such that there exist
isometric embeddings $i_n \colon X_n \to Z$ and $i \colon X \to Z$
such that $\{i_n^* \mu_{X_n}\}$ converges to $i^* \mu_X$ in the
Wasserstein metric.
\end{lemma}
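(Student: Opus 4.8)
The plan is to deduce the equivalence directly from its L{\'e}vy-Prokhorov counterpart, Lemma 5.7 in~\cite{Greven}, by transporting that statement across the sandwich inequality~\eqref{eq:wassersandwich}. Lemma 5.7 in~\cite{Greven} asserts exactly the desired equivalence but with the L{\'e}vy-Prokhorov metric in place of the Wasserstein metric: $\{X_n\}$ converges to $X$ in the Gromov-Prokhorov metric $d_{GP}$ if and only if there is a common space $(Z,\partial_Z)$ together with isometric embeddings $i_n\colon X_n\to Z$ and $i\colon X\to Z$ such that $\{(i_n)_\ast\mu_{X_n}\}$ converges to $i_\ast\mu_X$ in the L{\'e}vy-Prokhorov metric $d_P$. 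The role of~\eqref{eq:wassersandwich} is then to show that, on any space of bounded diameter, $d_W$ and $d_P$ induce the same notion of convergence, so that each occurrence of Prokhorov convergence above may be upgraded to Wasserstein convergence, both at the level of the ambient metrics and at the level of the Gromov distances.

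First I would dispose of the easy implication. If such a $Z$ and embeddings exist with $(i_n)_\ast\mu_{X_n}\to i_\ast\mu_X$ in $d_W$, then by the definition of $d_{GW}$ as an infimum over all isometric embeddings into a common space,
\[
d_{GW}(X_n,X)\le d_W^{(Z,\partial_Z)}\big((i_n)_\ast\mu_{X_n},\,i_\ast\mu_X\big)\to 0,
\]
so $\{X_n\}$ converges to $X$ in the Gromov-Wasserstein metric; this direction uses nothing beyond the definition of $d_{GW}$.

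For the converse I would proceed in three steps. Since $\{X_n\}$ converges to $X$ in $d_{GW}$, the diameters $\diam(X_n)$ are uniformly bounded, say by $D<\infty$, as is $\diam(X)$. The lower bound in~\eqref{eq:wassersandwich} holds on every common space $Z$, and taking infima over embeddings yields $d_{GP}\le d_{GW}^{1/2}$; hence $d_{GW}$-convergence forces $d_{GP}$-convergence. Applying Lemma 5.7 in~\cite{Greven} produces a common space $Z$ and isometric embeddings for which $(i_n)_\ast\mu_{X_n}\to i_\ast\mu_X$ in $d_P$. Finally, restricting $Z$ to a bounded region containing all the relevant supports, the upper bound in~\eqref{eq:wassersandwich} gives
\[
d_W^{(Z)}\big((i_n)_\ast\mu_{X_n},\,i_\ast\mu_X\big)\le(\diam(Z)+1)\,d_P^{(Z)}\big((i_n)_\ast\mu_{X_n},\,i_\ast\mu_X\big)\to 0,
\]
which is the claimed Wasserstein convergence.

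The step requiring the most care is the last one: the upgrade from $d_P$ to $d_W$ on $Z$ is valid only if the sandwich constant $(\diam(Z)+1)$ is finite, so one must verify that the common space furnished by the Gromov-Prokhorov result can be taken to have bounded diameter. This is where compactness is genuinely used. The limit $i_\ast\mu_X$ has compact, hence bounded, support $K$, and each $\mathrm{supp}\big((i_n)_\ast\mu_{X_n}\big)$ is the image of a space of diameter at most $D$, so has diameter at most $D$ in $Z$. Prokhorov convergence places positive mass of $(i_n)_\ast\mu_{X_n}$ within any neighborhood of $K$ for large $n$, so each such support meets a neighborhood of $K$ and, having diameter at most $D$, lies within distance $D+o(1)$ of $K$. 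Thus all but finitely many supports, together with $K$, lie in a single bounded region of $Z$, to which we may restrict; this bounds $\diam(Z)$ and prevents mass from escaping to infinity. Controlling this diameter uniformly is the only real content behind the phrase ``follows immediately,'' since without it Prokhorov convergence need not imply Wasserstein convergence.
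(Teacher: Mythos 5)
Your skeleton is exactly the paper's: the paper gives no detailed argument, asserting only that the lemma ``follows immediately'' from \eqref{eq:wassersandwich} and Lemma 5.7 of \cite{Greven}, and you correctly fill in the easy direction (definition of $d_{GW}$ as an infimum) and the implication $d_{GW}\to 0 \Rightarrow d_{GP}\to 0$ via the lower bound $d_P^2 \le d_W$, which indeed needs no diameter hypothesis. You also correctly identify the one nontrivial point: the upgrade from Prokhorov to Wasserstein convergence on the common space $Z$ requires a finite sandwich constant $\diam(Z)+1$. The problem is that your justification of this boundedness is false. You assert that $d_{GW}$-convergence forces the diameters $\diam(X_n)$ to be uniformly bounded, but it does not, even for compact spaces: take $X$ a one-point space and $X_n = \{p_n, q_n\}$ with $\partial_{X_n}(p_n,q_n) = n$, $\mu_{X_n}(\{p_n\}) = 1 - n^{-2}$ and $\mu_{X_n}(\{q_n\}) = n^{-2}$. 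Then each $X_n$ is compact and $d_{GW}(X_n, X) \le n \cdot n^{-2} = 1/n \to 0$, yet $\diam(X_n) = n \to \infty$. Since the Wasserstein cost sees only the first moment of displacement, vanishing mass may sit arbitrarily far away, so neither the supports $\mathrm{supp}\big((i_n)_\ast \mu_{X_n}\big)$ nor the space $Z$ furnished by the statement of Lemma 5.7 can be confined to a region of bounded diameter, and your final inequality collapses. The danger is real, not cosmetic: with mass $1/n$ at distance $n$ one has $d_P \to 0$ while $d_W \equiv 1$, so Prokhorov convergence on an unbounded $Z$ genuinely does not imply Wasserstein convergence, and your patch (support meets a neighborhood of $K$, hence lies within $D + o(1)$ of $K$) is exactly the step that uses the false uniform bound $D$.

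The clean repair is to use the \emph{proof} of Lemma 5.7 in \cite{Greven} rather than its statement. That lemma is proved by choosing, for each $n$, a space $Z_n$ with embeddings nearly realizing the Gromov distance between $X_n$ and $X$, and gluing the $Z_n$ along their common isometric copies of $\mathrm{supp}(\mu_X)$ into a single complete separable space $Z$; each $Z_n$ embeds isometrically in the glued space, so any near-optimal coupling on $Z_n$ remains a coupling on $Z$ with the same cost. This construction is metric-agnostic: running it with $d_W$ in place of $d_P$ gives
\[
d_W^{(Z,\partial_Z)}\big((i_n)_\ast\mu_{X_n},\, i_\ast\mu_X\big) \le d_W^{(Z_n,\partial_{Z_n})}\big((i_n)_\ast\mu_{X_n},\, i_\ast\mu_X\big) + \epsilon_n \le d_{GW}(X_n, X) + 2\epsilon_n \to 0,
\]
with no diameter hypothesis whatsoever. (Alternatively, your sandwich argument is valid verbatim under the additional assumption that the $X_n$ have uniformly bounded diameter --- which does hold in the paper's intended application, where everything lives inside the bounded barcode spaces $\aB_\epsilon$ and $\aB^N_\alpha$ --- but it does not prove the lemma as stated.)
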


Therefore, to prove Theorem~\ref{thm:convergence} it suffices to consider the weak convergence of $i_n^* \mu_{X_n}$ to $i^* \mu_X$ on the fixed metric measure space $Z$. 

\begin{proof}[Proof of \autoref{thm:convergence}]

Since compactness implies the Wasserstein distance metrizes weak convergence, we have for any continuity set $A$, 
\[
\mu^*_{X_n}(A) \to \mu^*_X(A),
\]
by the Portmanteau lemma.  Moreover, $\mu^*_{X_n}(A) = \mu_{X_n}(A)$
by hypothesis.  Therefore, we have that $\mu_{X_n}(A) \to \mu^*_X(A)$.
This implies immediately that for continuity sets $A$, $\mu^*_X(A) =
\mu_X(A)$. The result now follows from \autoref{lem:sepclass} below.
\end{proof}
\begin{lemma}\label{lem:sepclass}
Let $(X, \partial_X, \mu_X)$ be a metric measure space where
$(X, \partial_X)$ is Polish and $\mu_X$ a nonnegative Borel measure.  The collection $\Gamma_\mu$ of $\mu_X$-continuity sets forms a separating class. 
\end{lemma}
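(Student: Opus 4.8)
The plan is to show that the collection $\Gamma_\mu$ of $\mu_X$-continuity sets (Borel sets $A$ whose topological boundary $\partial A$ satisfies $\mu_X(\partial A)=0$) is a $\pi$-system that generates the Borel $\sigma$-algebra $\bor(X)$, and then to invoke Dynkin's $\pi$-$\lambda$ theorem. Recall that $\Gamma_\mu$ being a separating class means precisely that any two finite Borel measures agreeing on every member of $\Gamma_\mu$ must coincide, so once the generation statement is in hand the conclusion is immediate.

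First I would record the algebraic structure of $\Gamma_\mu$. From the elementary boundary inclusions $\partial(A\cap B)\subseteq \partial A\cup\partial B$ and $\partial(A\cup B)\subseteq \partial A\cup\partial B$, together with $\partial(A^c)=\partial A$, one sees that $\Gamma_\mu$ is closed under complements, finite intersections, and finite unions. In particular it is a field containing $\emptyset$ and $X$ (as $\partial X=\emptyset$), and hence in particular a $\pi$-system containing $X$. The real content is to prove $\sigma(\Gamma_\mu)=\bor(X)$, and here I would use separability of $(X,\partial_X)$ together with (local) finiteness of $\mu_X$. Fix a countable dense set $\{x_k\}\subseteq X$. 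For each $x_k$ the spheres $\{y:\partial_X(x_k,y)=r\}$ are pairwise disjoint as $r$ ranges over $(0,\infty)$, so at most countably many of them can carry positive $\mu_X$-mass; consequently $B_r(x_k)$ is a $\mu_X$-continuity set for all but countably many radii $r$. The resulting family of continuity balls is countable and forms a base for the topology: given an open $U$ and $x\in U$, one picks $x_k$ close to $x$ and an admissible radius $r$ with $x\in B_r(x_k)\subseteq U$. Since a countable base generates $\bor(X)$ and every basic set lies in $\Gamma_\mu$, we get $\bor(X)\subseteq\sigma(\Gamma_\mu)\subseteq\bor(X)$.

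With $\Gamma_\mu$ established as a $\pi$-system that contains $X$ and generates $\bor(X)$, the $\pi$-$\lambda$ theorem finishes the argument: two finite Borel measures agreeing on $\Gamma_\mu$ agree on the $\lambda$-system $\{A:\mu(A)=\nu(A)\}\supseteq\Gamma_\mu$, hence on all of $\bor(X)=\sigma(\Gamma_\mu)$, which is exactly the separating property. (If $\mu_X$ is only $\sigma$-finite rather than finite, I would additionally exhibit an increasing sequence of continuity sets exhausting $X$---for instance continuity balls of growing radius about a fixed point---and run the comparison on each piece.) The main obstacle is the generation step: one must guarantee that enough balls are continuity sets to recover the topology, and this is precisely where the disjoint-spheres observation and the finiteness of $\mu_X$ on bounded sets do the essential work.
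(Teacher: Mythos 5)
Your proof is correct and follows the same skeleton as the paper's: show that $\Gamma_\mu$ is a $\pi$-system via the boundary inclusion $\partial(A\cap B)\subseteq\partial A\cup\partial B$, show that it generates $\mathcal{B}(X)$ by exhibiting a base of the topology consisting of continuity sets, and conclude via the $\pi$-$\lambda$ theorem (a step the paper leaves implicit in the phrase ``separating class''). The one genuine difference is the generation step: the paper cites \citep[Proposition 8.2.8]{BogII}, which asserts for completely regular spaces that the open $\mu_X$-continuity sets form a base, and then uses second countability to extract a countable base, whereas you prove the base statement directly in the metric setting via a countable dense set $\{x_k\}$ and the observation that spheres about a fixed center are pairwise disjoint, so only countably many radii can carry positive mass. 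This is precisely the specialization of Bogachev's level-set argument to the continuous functions $y\mapsto\partial_X(x_k,y)$, so your route is self-contained and more elementary, at no loss for the intended application. Two minor remarks: first, as written your family of admissible balls is uncountable (co-countably many radii per center); to get a literal countable base, pick one admissible radius in each interval $(1/(n+1),1/n)$ per center, or invoke second countability/Lindel\"of exactly as the paper does --- either fix is immediate, and in fact any base of continuity sets already suffices since every open set in a Lindel\"of space is a countable union of basic sets. Second, your caveat about finiteness is well taken and sharper than the paper's statement: for a completely general nonnegative Borel measure the lemma can fail (for counting measure on $\mathbb{R}$ the only continuity sets are $\emptyset$ and $\mathbb{R}$), so finiteness of $\mu_X$ on balls --- automatic in the paper's application, where the measures are probability measures on compact spaces --- is genuinely needed for the disjoint-spheres count.
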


\begin{proof}
It suffices to show that $\Gamma_\mu$ is a $\pi$-system that generates the Borel $\sigma$-algebra $\mathcal{B}(X)$. Clearly, for $A, B \in \Gamma_{\mu}$, $\partial(A \cap B) \subset \partial A \cup \partial B$ and so 
\[
\mu_X(\partial(A \cap B)) \leq \mu_X(\partial A \cup \partial B) \leq \mu_X(\partial A) + \mu_X(\partial B) = 0,
\]
showing that $\Gamma_\mu$ is a $\pi$-system. 
To show that it generates the Borel $\sigma$-algebra on $X$, we remark that (by assumption) $X$ is completely regular and $\mu_X$ a nonnegative Borel measure. The set $\Gamma_\mu$ can therefore be shown to form a base for the topology of $X$ \citep[proposition 8.2.8, ][]{BogII}. Since $X$ is second countable, $\Gamma_\mu$ has a countable subfamily that is also a base for its topology, which completes the proof.
\end{proof}

\section{Any measure on barcode space is determined by its value on balls} \label{sec3}

Our main application is to the space $\aB^N_\alpha$ of $N$-point bounded persistence diagrams; these are persistence diagrams where the points are contained in a box $[0,\alpha] \times [0,\alpha]$ and there are at most $N$ points in each diagram.

Sheehy and Sheth observe that although $\aB^N_\alpha$ does not have finite doubling dimension, for any $\epsilon$ it is close to a metric space of finite doubling dimension in the following sense.

\begin{thm}
Fix $\epsilon > 0$.  There exists a metric space $\aB_\epsilon$ such that 
\[
d_{GH}(\aB_\epsilon, \aB^N_\alpha) < \epsilon.
\]
In particular, we have that for any sequence of indices $\{\epsilon_i\}$ such that $\epsilon_i \to 0$, the sequence $\{\aB_{\epsilon_i}\}$ converges to $\aB^N_\alpha$ in the Gromov-Hausdorff metric.
\end{thm}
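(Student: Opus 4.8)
The plan is to obtain the existence claim directly from the construction of Sheehy and Sheth~\cite{SS22} and then to read off the sequential statement as an immediate consequence of the definition of Gromov-Hausdorff convergence. The first, substantive assertion is exactly what~\cite{SS22} establishes, so the task reduces to recalling why such a space $\aB_\epsilon$ can be built at every scale and then bookkeeping the limit.

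The obstruction to $\aB^N_\alpha$ having finite doubling dimension lives entirely near the diagonal: points of small persistence may be matched to the diagonal at negligible cost, so that arbitrarily many nearly indistinguishable configurations can cluster inside a small ball and drive the doubling constant up without bound. The construction removes this pathology at scale $\epsilon$. Concretely, one fixes a threshold of order $\epsilon$ and collapses onto the diagonal every point of a diagram whose distance to the diagonal falls below this threshold; let $\aB_\epsilon$ denote the image of $\aB^N_\alpha$ under this truncation, equipped with the restricted matching metric. Since every surviving point then lies at distance at least the threshold from the diagonal inside the fixed box $[0,\alpha]\times[0,\alpha]$, the space $\aB_\epsilon$ is a finite union of bounded, finite-dimensional pieces and hence has finite doubling dimension. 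To control $d_{GH}(\aB_\epsilon, \aB^N_\alpha)$ one uses the truncation map itself as the correspondence: snapping a point of persistence below the threshold to the diagonal alters the matching distance by at most that persistence, so every diagram is displaced by at most a constant multiple of $\epsilon$, and calibrating the threshold makes this displacement strictly less than $\epsilon$, giving $d_{GH}(\aB_\epsilon, \aB^N_\alpha) < \epsilon$.

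For the sequential statement I would fix any $\{\epsilon_i\}$ with $\epsilon_i \to 0$ and apply the first part with $\epsilon = \epsilon_i$ to produce, for each $i$, a space $\aB_{\epsilon_i}$ with $d_{GH}(\aB_{\epsilon_i}, \aB^N_\alpha) < \epsilon_i$. Since $\epsilon_i \to 0$ forces $d_{GH}(\aB_{\epsilon_i}, \aB^N_\alpha) \to 0$, the sequence $\{\aB_{\epsilon_i}\}$ converges to $\aB^N_\alpha$ in the Gromov-Hausdorff metric by the very definition of that convergence. The only genuinely nontrivial step is the quantitative Gromov-Hausdorff bound in the construction --- the content imported from~\cite{SS22}, and in particular the verification that the truncation can be realized as a correspondence whose distortion is controlled by $\epsilon$; the passage to the limit is then purely formal.
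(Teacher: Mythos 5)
Your proposal is correct in its overall logic and, at the top level, matches the paper: the paper gives no proof of this theorem at all, importing it wholesale from Sheehy and Sheth~\cite{SS22}, and the sequential claim is, as you say, purely formal from the definition of Gromov--Hausdorff convergence. Where you diverge is in the construction you sketch. The construction described in the paper (and in~\cite{SS22}) is \emph{not} truncation: whereas $\aB^N_\alpha$ is the quotient of the upper half plane by the full line $y=x$, the space $\aB_\epsilon$ is the quotient by an $\epsilon$-dense \emph{discrete sample} of that line, so that $\aB_\epsilon$ surjects onto $\aB^N_\alpha$ via a further quotient map $p_\epsilon \colon \aB_\epsilon \to \aB^N_\alpha$. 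Your truncation instead collapses low-persistence points onto the continuous diagonal, producing a \emph{subspace} of $\aB^N_\alpha$ with the natural map going the other way, $\aB^N_\alpha \to \aB_\epsilon$. For the literal statement this is serviceable --- indeed, note the displayed inequality alone is trivially satisfiable by $\aB_\epsilon = \aB^N_\alpha$, so the genuine content is the finite doubling dimension emphasized in the surrounding text --- and your distortion bound (each diagram moves by at most half the truncation threshold) is correct and elementary.

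Two caveats. First, the attribution is off: since truncation is not the construction of~\cite{SS22}, you cannot defer its verification to that reference; your argument must stand on its own. Second, the inference ``finite union of bounded, finite-dimensional pieces, hence finite doubling dimension'' is too quick: doubling demands a uniform constant at \emph{all} scales and across strata with different numbers of points, and finite-dimensionality of the strata is not by itself the reason it holds. The correct argument is that at scales below a fixed fraction of the threshold no optimal matching can erase a point (erasure costs at least half the minimal persistence), so point counts are rigid and balls embed in finite quotients of Euclidean boxes, while at scales above the threshold the covering numbers are bounded by a constant depending only on $\epsilon$, $\alpha$, and $N$. This is fixable, so there is no fatal gap in your proof of the stated theorem; but be aware that the paper's subsequent lemma pulls measures \emph{back} along the surjection $p_\epsilon$, which your version does not furnish --- with truncation one would push forward instead, which would require reworking (though probably not break) the Section~\ref{sec3} argument.
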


We utilize the fact that $\aB_\epsilon$ surjects onto $\aB^N_\alpha$; to be more specific, whereas $\aB^N_\alpha$ is the quotient of the upper half plane by the line $y = x$, $\aB_\epsilon$ is the quotient by an $\epsilon$-dense sample of the line $y = x$.  Therefore, we have an evident quotient map $p_\epsilon \colon \aB_\epsilon \to \aB^N_\alpha$.
Because $p_\epsilon$ is a surjection, given any Borel measure on $\aB^N_\alpha$ we can pull it back along $p_\epsilon$ to obtain a measure on $\aB_\epsilon$.

\begin{lemma}
Let $\mu$ be a Borel measure on $\aB^N_\alpha$.  Fix a sequence of indices $\{\epsilon_i\}$ such that $\epsilon_i > 0$ and $\epsilon_i \to 0$ as $i \to \infty$.  Then the sequence $\{(\aB_{\epsilon_i}, p_{\epsilon_i}^* \mu)\}$ converges to $(\aB^N_\alpha, \mu)$ in the Gromov-Wasserstein metric.
\end{lemma}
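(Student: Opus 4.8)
The plan is to bound the Gromov--Wasserstein distance $d_{GW}\big((\aB_{\epsilon_i},p_{\epsilon_i}^{*}\mu),(\aB^N_\alpha,\mu)\big)$ by a quantity of order $\epsilon_i$ and then let $i\to\infty$; this is precisely convergence in the $d_{GW}$ metric. Since every diagram in $\aB^N_\alpha$ lies in the box $[0,\alpha]\times[0,\alpha]$, all the spaces under consideration have uniformly bounded diameter, so by~\eqref{eq:wassersandwich} the Wasserstein distances below are finite. Concretely, for each $\epsilon$ I will produce a single ambient metric space carrying isometric copies of $\aB_\epsilon$ and $\aB^N_\alpha$ together with an explicit coupling of the two pushed-forward measures whose Wasserstein cost is $O(\epsilon)$.

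I would record two facts about the quotient map $p_\epsilon\colon\aB_\epsilon\to\aB^N_\alpha$. The first is measure-theoretic and holds by the very definition of the pullback: $(p_\epsilon)_{*}\big(p_\epsilon^{*}\mu\big)=\mu$, so $p_\epsilon^{*}\mu$ is a Borel measure on $\aB_\epsilon$ whose pushforward along $p_\epsilon$ is $\mu$. The second is metric and is the crux of the argument. Because $\aB_\epsilon$ differs from $\aB^N_\alpha$ only in that the diagonal $y=x$ has been replaced by an $\epsilon$-dense sample, the map $p_\epsilon$ is a uniform $\epsilon$-isometry: there is a constant $c$, independent of $\epsilon$, with
\[
\big|\,\partial_{\aB^N_\alpha}\!\big(p_\epsilon(a),p_\epsilon(b)\big)-\partial_{\aB_\epsilon}(a,b)\,\big|\le c\,\epsilon \qquad\text{for all }a,b\in\aB_\epsilon .
\]
This is exactly the quantitative content underlying the bound $d_{GH}(\aB_\epsilon,\aB^N_\alpha)<\epsilon$ of Sheehy and Sheth: collapsing the $\epsilon$-net back onto the full diagonal can move the matching distance between two diagrams by at most a fixed multiple of $\epsilon$.

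Granting the distortion bound, I would glue the two spaces along the graph of $p_\epsilon$. On the disjoint union $Z_\epsilon=\aB_\epsilon\sqcup\aB^N_\alpha$ keep the two intrinsic metrics and set, for $a\in\aB_\epsilon$ and $y\in\aB^N_\alpha$,
\[
\partial_{Z_\epsilon}(a,y)=\inf_{b\in\aB_\epsilon}\Big(\partial_{\aB_\epsilon}(a,b)+\partial_{\aB^N_\alpha}\big(p_\epsilon(b),y\big)\Big)+c\,\epsilon .
\]
A direct check using the distortion inequality shows that $\partial_{Z_\epsilon}$ is a metric, that the two inclusions are isometric embeddings, and that $\partial_{Z_\epsilon}\big(a,p_\epsilon(a)\big)\le c\,\epsilon$ for every $a$. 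Now push the measure $p_\epsilon^{*}\mu$ through the graph of $p_\epsilon$: the measure $\pi_\epsilon=(\mathrm{id},p_\epsilon)_{*}\big(p_\epsilon^{*}\mu\big)$ on $Z_\epsilon\times Z_\epsilon$ has first marginal $p_\epsilon^{*}\mu$ and, by $(p_\epsilon)_{*}(p_\epsilon^{*}\mu)=\mu$, second marginal $\mu$, so it is a coupling. Therefore
\[
d_{GW}\big((\aB_\epsilon,p_\epsilon^{*}\mu),(\aB^N_\alpha,\mu)\big)\le\int_{\aB_\epsilon}\partial_{Z_\epsilon}\big(a,p_\epsilon(a)\big)\,d\big(p_\epsilon^{*}\mu\big)(a)\le c\,\epsilon ,
\]
and taking $\epsilon=\epsilon_i\to 0$ yields the asserted Gromov--Wasserstein convergence.

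The main obstacle is the uniform distortion bound in the second paragraph; everything downstream of it is routine (the gluing, the marginals of $\pi_\epsilon$, and the reduction to finite $d_W$ via bounded diameter). Establishing it requires understanding the Sheehy--Sheth construction concretely enough to see that rematching points of a diagram to an $\epsilon$-net of the diagonal, rather than to the diagonal itself, perturbs the matching distance by at most $O(\epsilon)$ \emph{uniformly} over all configurations of at most $N$ points in the box; it is this uniformity over the whole bounded space that upgrades the Gromov--Hausdorff estimate into the pointwise transport bound needed for the coupling.
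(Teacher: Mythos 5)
Your overall architecture is sound and genuinely different from the paper's. The paper does not bound $d_{GW}$ directly: it invokes the equivalence between Gromov--Wasserstein convergence and convergence of the infinite distance matrix distributions (Gromov's reconstruction theorem, via \cite[Thm 9.5]{Greven} together with \eqref{eq:wassersandwich}) and then checks that the distance matrix distributions converge. You instead produce an explicit ambient space by gluing $\aB_{\epsilon}$ and $\aB^N_\alpha$ along the graph of $p_\epsilon$ and use the coupling $(\mathrm{id},p_\epsilon)_*(p_\epsilon^{*}\mu)$, whose marginals are correct because $(p_\epsilon)_*(p_\epsilon^{*}\mu)=\mu$, to get $d_{GW}\le c\,\epsilon_i$ outright. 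The downstream mechanics check out: the $+c\epsilon$ offset in your glued metric absorbs the distortion in the triangle inequality, the inclusions are isometric, and the transport cost is at most $c\epsilon$. Your route is more elementary and more quantitative (it gives an explicit rate, and it avoids the reconstruction theorem entirely); it also sidesteps the paper's somewhat superfluous appeal to the area of the strip near the diagonal, since a uniform distortion bound makes the $\mu$-mass of that strip irrelevant.

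The genuine issue is that you leave the lemma's only nontrivial content --- the uniform distortion bound for $p_\epsilon$ --- as a granted hypothesis, and justify it by pointing at the Gromov--Hausdorff estimate of \cite{SS22}. That inference is not automatic: $d_{GH}(\aB_\epsilon,\aB^N_\alpha)<\epsilon$ only guarantees the existence of \emph{some} correspondence of small distortion, not that the specific measure-compatible map $p_\epsilon$ distorts distances by $O(\epsilon)$, and your coupling argument needs the latter. Fortunately the verification is short, and it is exactly what the paper's proof supplies: if neither of two diagrams in $\aB_\epsilon$ has a point within $\epsilon$ of the line $y=x$, the optimal matchings are unaffected by the quotient and $d_{\aB^N_\alpha}(p_\epsilon a,p_\epsilon b)=d_{\aB_\epsilon}(a,b)$ exactly; otherwise, any matching that sends points to the diagonal can be rerouted to the $\epsilon$-dense net at extra cost at most $\epsilon$ per point, and since each diagram has at most $N$ points this gives $\big|d_{\aB_\epsilon}(a,b)-d_{\aB^N_\alpha}(p_\epsilon a,p_\epsilon b)\big|\le N\epsilon$ uniformly. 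Inserting $c=N$ (here $N$ is fixed, so this is an admissible constant) closes your argument; without this case analysis, the proof as written is conditional on its key step.
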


\begin{proof}
Recall that convergence in the Gromov-Wasserstein metric is equivalent to convergence of the infinite distance matrix distributions; this is a consequence of Gromov's reconstruction theorem, see e.g.~\cite[Thm 9.5]{Greven} for a statement, together with \eqref{eq:wassersandwich}.  Let $B_1$ and $B_2$ be barcodes in $B_\epsilon$ with no points within $\epsilon$ of the line $y=x$; then \[
d_{\aB^N_\alpha}(B_1, B_2) = d_{\aB_\epsilon}(p_\epsilon B_1, p_\epsilon B_2).
\]
If at least one of the pair $B_1$ and $B_2$ has points within $\epsilon$ of the line $y = x$, then we have that
\[
|d_{\aB^N_\alpha}(B_1, B_2) - d_{\aB_\epsilon}(p_\epsilon B_1, p_\epsilon B_2)| \leq N\epsilon.
\]
Since the area of this strip is bounded above by $\sqrt{\alpha} \epsilon$, it is clear that the distance matrix distributions converge as $\epsilon \to 0$.
\end{proof}

As a corollary of the main theorem, we arrive at the following theorem of interest in intended applications:

\begin{thm}\label{thm:barcodeproperty}
The space $\aB^N_\alpha$ of $N$-point bounded barcodes has the property that any Borel measure is determined by its values on balls.
\end{thm}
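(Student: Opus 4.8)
The plan is to obtain this theorem as a direct corollary of the machinery assembled above, combining the Sheehy--Sheth approximation, the preceding convergence lemma, and the main convergence result \autoref{thm:convergence}. Before invoking these, I would first reduce to the case of a Borel \emph{probability} measure. Since $\aB^N_\alpha$ is compact, every locally finite Borel measure on it is finite; and since both the Carath\'eodory outer measure $\mu_C$ and the supremum defining $\mu^*$ scale linearly in $\mu$, the property $\mu = \mu^*$ of being determined by values on balls is unchanged under multiplication by a positive constant. It therefore suffices to fix an arbitrary Borel probability measure $\mu$ on $\aB^N_\alpha$ and show that $\mu = \mu^*$.

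Next I would fix a sequence of indices $\epsilon_i > 0$ with $\epsilon_i \to 0$ and pass to the approximating spaces $(\aB_{\epsilon_i}, p_{\epsilon_i}^*\mu)$. Each $\aB_{\epsilon_i}$ is a quotient of the same bounded planar region underlying $\aB^N_\alpha$ --- now collapsing only an $\epsilon_i$-dense sample of the diagonal --- and hence is again a compact metric space; by the Sheehy--Sheth theorem (\cite{SS22}) it moreover has finite doubling dimension. The key point is that on a metric space of finite doubling dimension the classical theory (\cite{Christensen}) guarantees that \emph{every} Borel measure is determined by its values on balls. Applied to the pullback measure this yields $p_{\epsilon_i}^*\mu = (p_{\epsilon_i}^*\mu)^*$ for every $i$, which is precisely the hypothesis ``$\mu_{X_n} = \mu^*_{X_n}$'' required by \autoref{thm:convergence}.

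It then remains to supply the convergence input. By the preceding lemma, the sequence $\{(\aB_{\epsilon_i}, p_{\epsilon_i}^*\mu)\}$ converges to $(\aB^N_\alpha, \mu)$ in the Gromov--Wasserstein metric. Applying \autoref{thm:convergence} with $X_n = \aB_{\epsilon_i}$, $\mu_{X_n} = p_{\epsilon_i}^*\mu$, and limit $(\aB^N_\alpha, \mu)$, all hypotheses are met and we conclude $\mu = \mu^*$. As $\mu$ was an arbitrary Borel probability measure, the reduction in the first step completes the argument.

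The assembly itself is short, so the substantive work --- and the main place an error could hide --- lies in verifying the hypotheses rather than in any new estimate. Concretely, one must check that each $\aB_{\epsilon_i}$ together with $p_{\epsilon_i}^*\mu$ is genuinely a compact metric measure space (so that \autoref{thm:convergence}, whose proof relies on Wasserstein distance metrizing weak convergence and on the Portmanteau lemma, applies), and, most importantly, that the finite-doubling-dimension conclusion of Sheehy--Sheth delivers the \emph{space-level} property forcing $p_{\epsilon_i}^*\mu = (p_{\epsilon_i}^*\mu)^*$ for the specific pullback measures in play, rather than merely the weaker statement available for doubling measures. Once these checks are in place, the theorem follows.
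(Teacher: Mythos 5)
Your proposal matches the paper's argument: the paper states \autoref{thm:barcodeproperty} as an immediate corollary of \autoref{thm:convergence}, using the Sheehy--Sheth spaces $\aB_{\epsilon_i}$ (which, having finite doubling dimension, satisfy the ball-determinacy property by the classical results surveyed in \cite{Christensen}) together with the lemma showing $\{(\aB_{\epsilon_i}, p_{\epsilon_i}^*\mu)\}$ converges to $(\aB^N_\alpha, \mu)$ in the Gromov--Wasserstein metric. Your added normalization step reducing to probability measures is a sensible explicit detail the paper leaves implicit (the Wasserstein coupling formulation presupposes equal total mass), but it does not change the route.
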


\section{Applications to persistent homology and to zigzag persistence} \label{sec4}

The real importance of Theorem~\ref{thm:barcodeproperty} is for studying distributions on barcode space induced by pushforward along the functors of topological data analysis. To make this more precise, consider a Polish-valued stochastic process $X=(X_t: t\in \znum)$ with law $\mu_X$, which can be viewed as a metric measure dynamical system \citep[see ][section 3]{vDB23}. For a finite sample $\{{X}_t\}_{t=1}^J$ we denote the  joint distribution on the index set $[J]=\{1,\ldots, J\}$ by  $\mu_J^X$. In applications, we only observe the finite sample $\{{X}_t\}_{t=1}^J$ in the form of a sequence of  point clouds $\tilde{X}_1,\ldots, \tilde{X}_J$, each consisting of an $n$-dimensional sample of its underlying latent metric measure space. Observe that $\{{X}_t\}_{t=1}^J$ can be viewed as a metric measure space $(S^{|J|}, \rho_{|J|}, \mu_J^X)$. A similar statement holds true for the corresponding sequence of point clouds. For example, the independent sampling case from a fixed finite measure  metric space -- which has received paramount attention in the TDA literature -- is a special case where then simply $\mu_n^{{X}} = (\mu_1^{{X}})^{\otimes n}$. Note this applies in particular to random sampling from $\mathbb{R}^n$. 

The most well-known functor of topological data analysis is persistent homology.  The joint measure $\mu_J^{{X}}$ can be pushed forward by the persistent homology functor to obtain a distribution on barcodes, i.e., $(\PH_k)_{\star} \mu_J^{{X}}$. From 
\autoref{thm:barcodeproperty} it follows immediately that the resulting distribution on barcodes is determined by its values on balls. 

\begin{Corollary}
The pushforward of any Borel measure on point clouds under the persistent homology functor $PH_k$ is determined by its values on balls.
\end{Corollary}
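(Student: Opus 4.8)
The plan is to reduce the statement directly to \autoref{thm:barcodeproperty} by verifying that the pushforward is a genuine Borel measure carried by a single space of the form $\aB^N_\alpha$. Concretely, let $\mu$ be a Borel measure on the space of point clouds consisting of $n$ points drawn from a fixed compact metric measure space $(X,\partial_X,\mu_X)$. I would first argue that $PH_k$ sends every such point cloud into one bounded barcode space $\aB^N_\alpha$, and that it does so measurably; then $(PH_k)_\star \mu$ is a Borel measure on $\aB^N_\alpha$, and \autoref{thm:barcodeproperty} applies verbatim.

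For the image bound I would use two finiteness facts. Since each point cloud has exactly $n$ points, its Vietoris--Rips complex has at most $\binom{n}{k+1}$ simplices in degree $k$, and the number of bars of $PH_k$ is bounded by this count; hence every diagram has at most $N = \binom{n}{k+1}$ points, uniformly over all point clouds. Since $X$ is compact, its diameter $D = \diam(X)$ is finite, and every Vietoris--Rips bar is born and dies at a scale in $[0,D]$, so taking $\alpha = D$ places every diagram inside the box $[0,\alpha]\times[0,\alpha]$. Together these show that $PH_k$ lands in the single space $\aB^N_\alpha$ for this fixed $N$ and $\alpha$.

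The measurability step is where the real content lies. I would invoke the stability theorem for Vietoris--Rips persistent homology: the bottleneck (equivalently, matching) distance on $\aB^N_\alpha$ between the images under $PH_k$ of two point clouds is bounded by a constant multiple of the Hausdorff --- or Gromov--Hausdorff --- distance between the point clouds. Stability therefore makes $PH_k$ Lipschitz, hence continuous, hence Borel measurable, as a map from the space of point clouds into $\aB^N_\alpha$. Consequently the pushforward $(PH_k)_\star \mu$ is a well-defined Borel measure on $\aB^N_\alpha$, and by \autoref{thm:barcodeproperty} it is determined by its values on balls, which is the claim.

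The main obstacle is precisely this measurability step: one must ensure that the metric chosen on the space of point clouds is the one for which the stability theorem is stated (Hausdorff or Gromov--Hausdorff), and that the resulting continuity is with respect to the matching metric used to define balls in $\aB^N_\alpha$. Once these two metrics are matched, continuity --- and hence measurability of $PH_k$ and Borel-ness of the pushforward --- follows from the classical stability results, and no further estimates are required beyond the uniform bounds on $N$ and $\alpha$ established above.
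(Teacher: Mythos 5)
Your proposal is correct and takes essentially the same route as the paper, which deduces the corollary immediately from \autoref{thm:barcodeproperty} once the pushforward is viewed as a Borel measure on a single space $\aB^N_\alpha$. The details you supply --- the uniform bounds $N = \binom{n}{k+1}$ and $\alpha = \diam(X)$, and Borel measurability of $PH_k$ via bottleneck stability --- are precisely the steps the paper leaves implicit, and they are correct (modulo the standard convention of truncating or discarding the essential $H_0$ bar so that all diagrams lie in the box $[0,\alpha]\times[0,\alpha]$).
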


 ``Classical'' persistent homology as an intermediate step involves filtered sequences of spaces of the form 
 \[
\VR_{\epsilon_0}(\tilde{X}) \to \VR_{\epsilon_1}(\tilde{X})  \to \ldots \to \VR_{\epsilon_n}(\tilde{X}) 
\]
for $\epsilon_0 < \epsilon_1 < \ldots < \epsilon_n$ and 
where $\VR$ denotes the Vietoris-Rips complex (though other complexes could be considered), and where $\tilde{X}$ is a given point cloud.  Here the arrows are inclusions of complexes induced by the functoriality of the Vietoris-Rips complex.  In such a diagram, all of the arrows go in the same direction.  Applying homology $H_k(-)$ to this sequence then produces the persistent homology of $\tilde{X}$, which can be summarized by a persistence diagram and denoted $\PH_k(\tilde{X})$.

A variant of this setup, referred to as ``zigzag persistence'', relaxes the requirement that the arrows go in the same direction.  A notable example is defined as follows.  Suppose that we have a sample of point clouds $\tilde{X}_1, \tilde{X}_2, \ldots, \tilde{X}_J$, obtained as a sample from the metric measure space $(S^{|J|}, \rho_{|J|}, \mu_J^X)$ as above.  Then there is a zigzag diagram constructed by taking the Vietoris-Rips complexes for unions of adjacent indices (imposing a fixed scale $\epsilon$ throughout the diagram):
\[
\VR_\epsilon(\tilde{X}_1) \rightarrow \VR_{\epsilon}(\tilde{X}_1 \cup \tilde{X}_2) \leftarrow \VR_{\epsilon}(\tilde{X}_2) \rightarrow \VR_{\epsilon}(\tilde{X}_2 \cup \tilde{X}_3) \rightarrow \ldots \tageq \label{eq:zzdiag}
\]
Note that this diagram depends on the choice of indices, i.e., the ordering of the point clouds.  Applying homology $H_k(-)$ to this diagram produces a zigzag of homology groups that can again be summarized as a barcode, the zigzag persistence $ZP_k$.  We now get a measure on barcodes by $({ZP}_k)_\ast \mu_J^{{X}}$, and so for instance constructing a sample from $\mu_X$ on a given index set and pushing forward along the zigzag persistence functor again yields a distribution on barcodes that is determined by its values on balls.  However, note that there are a wide variety of other possible zigzag diagrams could be constructed from a given data set. Although it is therefore challenging to state a precise general theorem, a consequence of \autoref{thm:barcodeproperty} is that for any reasonable procedure that produces zigzag persistence diagrams, the pushforward to barcode space induces a distribution that is determined by its values on balls. 

\begin{Corollary}
The pushforward of any Borel measure on point clouds under the zigzag persistent homology functor $ZP_k$ is determined by its values on balls.
\end{Corollary}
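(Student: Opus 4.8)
The plan is to factor the problem through \autoref{thm:barcodeproperty} by exhibiting $ZP_k$ as a Borel measurable map whose image lies in a single fixed bounded barcode space $\aB^N_\alpha$, after which the conclusion is immediate. I first fix a zigzag construction procedure; for concreteness take the diagram~\eqref{eq:zzdiag} built at a fixed scale $\epsilon$ from a sequence of point clouds $\tilde{X}_1,\ldots,\tilde{X}_J$, each consisting of at most $n$ points. Applying $H_k$ yields a representation of the type-$A$ quiver on the index set $\{1,\ldots,2J-1\}$, which by the structure theorem for pointwise finite-dimensional zigzag modules decomposes into finitely many interval summands; the barcode $ZP_k(\tilde{X}_1,\ldots,\tilde{X}_J)$ is the multiset of these intervals.

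Next I would produce the uniform bounds placing every such barcode in one $\aB^N_\alpha$. Since each $\tilde{X}_i$ is finite, every complex in~\eqref{eq:zzdiag} is a finite simplicial complex on at most $2n$ vertices, so $\dim H_k \le \binom{2n}{k+1}$ uniformly, and the number of interval summands is bounded by $N := (2J-1)\binom{2n}{k+1}$. Each bar is an interval with endpoints in the finite index set $\{1,\ldots,2J-1\}$, so its birth--death pair lies in the box $[0,\alpha]\times[0,\alpha]$ with $\alpha := 2J-1$. Hence $ZP_k$ maps into $\aB^N_\alpha$.

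The main obstacle is measurability of $ZP_k$, and this is where I expect to spend the most effort. Because $\epsilon$ is held fixed, the isomorphism type of each $\VR_\epsilon(\cdot)$, and therefore of the whole diagram~\eqref{eq:zzdiag} together with its barcode, is determined by the pattern of which pairwise distances satisfy $\partial_S(\cdot,\cdot)\le \epsilon$. This pattern is constant on the strata of the configuration space cut out by the finitely many inequalities comparing a pairwise distance to $\epsilon$; each stratum is Borel because the distance functions are continuous, and $ZP_k$ is constant on each stratum. As there are only finitely many pairs, $ZP_k$ takes finitely many values, each on a Borel set, and is thus Borel measurable. (For zigzag procedures in which the scale is allowed to vary continuously one instead invokes algebraic stability of zigzag modules to get continuity, hence measurability, of $ZP_k$, leaving the rest of the argument unchanged.)

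Finally, the pushforward $(ZP_k)_\ast\mu$ of any Borel measure $\mu$ on point clouds is then a well-defined Borel measure on $\aB^N_\alpha$, and \autoref{thm:barcodeproperty} applies directly to conclude that it is determined by its values on balls. The only features of the construction used are finiteness of the complexes (bounding the number of bars) and boundedness of the index set (bounding birth and death), so the same reasoning covers any reasonable zigzag procedure producing diagrams over a bounded finite poset from point clouds of bounded cardinality, which is the informal generality asserted in the surrounding discussion.
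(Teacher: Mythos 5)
Your proposal is correct and follows essentially the same route as the paper: the paper's argument is precisely that the zigzag barcode lands in a fixed bounded space $\aB^N_\alpha$, so the conclusion is immediate from \autoref{thm:barcodeproperty} applied to the pushforward $(ZP_k)_\ast \mu$. You additionally supply details the paper leaves implicit --- the explicit bounds $N=(2J-1)\binom{2n}{k+1}$ and $\alpha = 2J-1$, and the Borel measurability of $ZP_k$ via the finite stratification by distance comparisons --- all of which are sound and consistent with the paper's informal remark that the argument covers any reasonable zigzag procedure.
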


\end{document}